\newtheorem{thm}{Theorem}[section]
\newtheorem{lemma}[thm]{Lemma}
\newtheorem{proposition}[thm]{Proposition}
\theoremstyle{definition}
\newtheorem{remark}[thm]{Remark}
\newtheorem{definition}[thm]{Definition}
\numberwithin{equation}{section}
\newcommand{\bQ}{\overline{\mathbb{Q}}}
\newcommand{\bZ}{\overline{\mathbb{Z}}}
\newcommand{\bF}{\overline{\mathbb{F}}}
\newcommand{\C}{\mathbb{C}}
\newcommand{\R}{\mathbb{R}}
\newcommand{\Q}{\mathbb{Q}}
\newcommand{\Z}{\mathbb{Z}}
\newcommand{\F}{\mathbb{F}}
\newcommand{\GSp}{{\rm GSp}}
\newcommand{\lra}{\longrightarrow}
\newcommand{\sgn}{{\rm sgn}}
\newcommand{\A}{\mathbb{A}}
\renewcommand{\O}{\mathcal{O}}
\newcommand{\br}{\overline{\rho}}
\newcommand{\diag}{{\rm diag}}
\newcommand{\disc}{{\rm disc}}
\newcommand{\ds}{\displaystyle}
\newcommand{\uk}{\underline{k}}
\newcommand{\gl}{{\rm GL}}
\newcommand{\GL}{{\rm GL}}
\newcommand{\bs}{\backslash}
\newcommand{\ot}{\overline{\tau}}
\newcommand{\Hom}{{\rm Hom}}
\begin{document}

\title[Automorphy of certain mod 2 Galois representations] 
{Automorphy of mod 2 Galois representations associated to certain genus 2 curves over totally real fields} 
\author{Alexandru Ghitza and Takuya Yamauchi}

%\date{\today}
\keywords{genus 2 curves, mod 2 Galois representations, automorphy}
\thanks{The second author is partially supported
by JSPS KAKENHI Grant Number (B) No.19H01778.
The authors thank the anonymous referee for their insightful comments.}
\subjclass[2010]{11F, 11F33, 11F80}

\address{Alexandru Ghitza \\
  School of Mathematics and Statistics\\
  University of Melbourne, Parkville, VIC 3010, AUSTRALIA}
\email{aghitza@alum.mit.edu}
\urladdr{https://aghitza.org}

\address{Takuya Yamauchi \\ 
Mathematical Inst. Tohoku Univ.\\
 6-3,Aoba, Aramaki, Aoba-Ku, Sendai 980-8578, JAPAN}
\email{takuya.yamauchi.c3@tohoku.ac.jp}
\urladdr{https://sites.google.com/site/takuyayamauchigsp4}

\maketitle

\begin{abstract}
Given a genus two hyperelliptic curve $C$ over a totally real field $F$,
we show that the mod $2$ Galois representation $\br_{C,2}\colon{\rm Gal}(\overline{F}/F)\lra {\rm GSp}_4(\F_2)$
attached to $C$  
is residually automorphic when the image of $\br_{C,2}$ is isomorphic to $S_5$ and it is also a transitive subgroup under a 
fixed isomorphism ${\rm GSp}_4(\F_2)\simeq S_6$. More precisely, there exists a
Hilbert--Siegel Hecke eigen cusp form $h$ on ${\rm GSp}_4(\A_F)$ of parallel weight two
whose mod $2$ Galois representation $\br_{h,2}$ is isomorphic to $\br_{C,2}$.
\end{abstract}

\bigskip
\section{Introduction}\label{intro}
Let $K$ be a number field in an algebraic closure $\bQ$ of $\Q$ and $p$ be a prime number.  
%Fix an embedding $\bQ\hookrightarrow \C$ and an isomorphism $\bQ_p\simeq \C$. 
%Let $\iota=\iota_p:\bQ\lra\bQ_p$ be an embedding that is compatible with the maps  $\bQ\hookrightarrow \C$ and
%$\bQ_p\simeq \C$ fixed above.
In proving automorphy of a given geometric $p$-adic Galois representation
\begin{equation*}
  \rho\colon G_K:={\rm Gal}(\bQ/K)\lra {\rm GL}_n(\bQ_p),
\end{equation*}
a first step would be, typically, to observe residual automorphy of its mod $p$ reduction
\begin{equation*}
\br\colon G_K\lra {\rm GL}_n(\bF_p)
\end{equation*}
obtained after choosing a suitable integral lattice.
However, proving residual automorphy of $\br$ is hard unless
the image of $\br$ is reasonable so that one can apply the current results in 
the theory of automorphic representations. For instance, if the image is solvable, one can use the Langlands base change
argument to find an automorphic cuspidal representation as is done in many known cases 
(cf.~\cite{Serre},~\cite{Tu},~\cite{Ca},~\cite{Ra},~\cite{Wong}).
Another natural problem is to find geometric objects whose residual Galois representations have suitably small images in question so that one can apply the 
various known results for automorphy of Galois representations.

In this paper, we study residual automorphy of mod $2$ Galois representations associated to 
certain hyperelliptic curves of genus $2$.
Let us fix some notation to explain our results. 
Let $F$ be a totally real field and fix an embedding $F\subset \bQ$.
Let us consider the hyperelliptic curve over $F$ of genus $2$ defined by
\begin{equation}\label{C}
C\colon\quad y^2=f(x):=x^6+a_1x^5+\dots+a_5x+a_6,\quad a_1,\ldots,a_6\in F.
\end{equation}
Let $D_f$ be the discriminant of $f$. If we write $f(x)=\ds\prod_{i=1}^6(x-\alpha_i)$ 
over $\bQ$, then  $D_f=\ds\prod_{1\le i<j\le 6}(\alpha_i-\alpha_j)^2$. 
By abusing the notation, we also denote by $C$ a unique smooth completion of the 
above hyperelliptic curve. 
Let $J={\rm Jac}(C)$ be the Jacobian variety of $C$ and for each positive integer $n$, let $J[n]$ be the group scheme of
$n$-torsion points. Let $T_{J,2}$ 
be the $2$-adic Tate module over $\Z_2$ associated to $J$.
Let $\langle \ast,\ast \rangle\colon T_{J,2}\times T_{J,2}\lra \Z_2(1)$ be the Weil pairing, which is $G_F$-equivariant, perfect, and alternating.
It yields an integral $2$-adic Galois representation
\[
  \rho_{C,2}\colon G_F\lra {\rm GSp}(T_{J,2},\langle \ast,\ast \rangle)\simeq {\rm GSp}_4(\Z_2)
\]
where the algebraic group $\mathrm{GSp}_4= \mathrm{GSp}_J$ is the symplectic similitude group in $\mathrm{GL}_4$ 
associated to $\begin{pmatrix} 0_2& s\\-s &0_2\end{pmatrix},\ 
s=\begin{pmatrix} 0& 1\\1 &0\end{pmatrix}$. 
Put
\begin{equation*}
\overline{T}_{J,2}=T_{J,2}\otimes_{\Z_2}\F_2=J[2](\bQ).
\end{equation*}
This yields a mod 2 Galois representation 
\begin{equation}\label{eq:mod2}
  \br_{C,2}\colon G_F\lra 
{\rm GSp}(\overline{T}_{J,2},\langle \ast,\ast \rangle_{\F_2})\simeq {\rm GSp}_4(\F_2).
\end{equation} 
On the other hand, we can embed the Galois group ${\rm Gal}(F_{f}/F)$ of the splitting field of $f$ over $F$
into $S_6$ by permutation of the roots of $f(x)$; we denote this embedding by $\iota_f\colon {\rm Gal}(F_{f}/F)\hookrightarrow S_6$.
Here $S_n$ stands for the $n$-th symmetric group.
Since $\overline{T}_{J,2}$ is generated (cf.~\cite{Sa}) by divisors associated
to the points $(x,0)$ in $C(\bQ)$,  $\br_{C,2}$ factors through ${\rm Gal}(F_{f}/F)$; we denote it by
$\br_{C,2}\colon {\rm Gal}(F_{f}/F)\hookrightarrow {\rm GSp}_4(\F_2)$ again. A subgroup $H$ of $S_6$ is transitive if 
it acts on the set $\{1,2,3,4,5,6\}$ transitively. 
Let us fix an isomorphism ${\rm GSp}_4(\F_2)\simeq S_6$ commuting with $\br_{C,2}$ and $\iota_f$. 

In this situation we will prove the following:
\begin{thm}\label{main1}
Keep the notation as above. Suppose that ${\rm Im }(\br_{C,2})\simeq S_5$ and 
it is a transitive subgroup of $S_6$ via
the fixed isomorphism between ${\rm GSp}_4(\F_2)$ and $S_6$. Further assume that for any complex conjugation $c$ in $G_F$, 
$\br_{C,2}(c)$ is of type $(2,2)$ as an element of $S_5$.  
Then 
there exists a Hilbert--Siegel Hecke eigen cusp form $h$ on ${\rm GSp}_4(\A_F)$ of
parallel weight $2$ such that $\br_{h,2}\simeq \br_{C,2}$
as a representation to ${\rm GL}_4(\bF_2)$,
where $\br_{h,2}$ is the reduction modulo $2$ of the $2$-adic representation $\rho_{h,2}$ associated to $h$ $($see~Section \ref{agr} for $\rho_{h,2})$.  
\end{thm}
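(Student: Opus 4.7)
The plan is to reduce to a $2$-dimensional mod $2$ representation over a quadratic subextension, apply a Serre-type modularity theorem there, and transfer back to $\gs_4(\A_F)$ via the Asai (tensor-induction) construction.

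First, since $\br_{C,2}(c)$ has cycle type $(2,2)$, every complex conjugation lies in the index-$2$ subgroup $A_5\subset S_5$, so the quadratic subfield $M\subset F_f$ fixed by $\br_{C,2}^{-1}(A_5)$ is totally real. Using the exceptional isomorphism $A_5\simeq {\rm SL}_2(\F_4)$, the restriction $\br_{C,2}|_{G_M}$ packages as a $2$-dimensional mod $2$ representation
\[
  \ot\colon G_M\lra {\rm SL}_2(\F_4)\hookrightarrow \gl_2(\F_4).
\]
By Steinberg's tensor-product theorem the unique absolutely irreducible $4$-dimensional $\bF_2[A_5]$-module is $V\otimes V^{(2)}$, where $V$ is the natural module and $V^{(2)}$ its Frobenius twist; hence $\br_{C,2}|_{G_M}\simeq \ot\otimes\ot^{(2)}$. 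A lift of the generator of ${\rm Gal}(M/F)$ acts via the outer automorphism of $A_5$ — the Frobenius of $\F_4/\F_2$ swapping the two factors — so $\br_{C,2}$ is exactly the tensor induction of $\ot$ from $G_M$ to $G_F$.

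The main obstacle is establishing modularity of $\ot$. The representation is absolutely irreducible with non-solvable projective image $A_5\simeq {\rm PSL}_2(\F_4)$, is totally odd (a $(2,2)$-element of $A_5$ is a non-trivial involution in ${\rm SL}_2(\F_4)$ at each infinite place of $M$), and has Serre weight compatible with parallel weight $2$ (readable off the local $2$-adic structure of $J$). I would invoke a Serre-type modularity theorem for $2$-dimensional mod $2$ Hilbert-modular Galois representations with non-solvable image over a totally real field, in the generality developed by Kisin, Khare--Wintenberger, Gee, Sasaki and Thorne, to obtain a Hilbert cuspidal eigenform $g$ on $\gl_2(\A_M)$ of parallel weight $2$ with $\br_{g,2}\simeq\ot$. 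Alternatively, one may route through the accidental isomorphism $A_5\simeq {\rm PSL}_2(\F_5)$ to realise the projectivisation of $\ot$ as an icosahedral mod $5$ image, apply the known automorphy of icosahedral representations (Shepherd-Barron--Taylor and its totally real extensions), and pass back to characteristic $2$ via congruence.

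Given $g$, I would construct $h$ by applying the Asai (multiplicative-induction) functorial lift, producing a cuspidal representation ${\rm As}(g)$ on $\gl_4(\A_F)$; cuspidality is automatic since $\ot$ is not ${\rm Gal}(M/F)$-invariant (its image ${\rm SL}_2(\F_4)$ is non-trivially permuted by Frobenius). On the Galois side ${\rm As}(\rho_g)|_{G_M}\simeq \rho_g\otimes \rho_g^c$, whose mod $2$ reduction is $\ot\otimes\ot^{(2)}\simeq \br_{C,2}|_{G_M}$ with the outer $c$-action matching on both sides, so $\overline{{\rm As}(\rho_g)}\simeq \br_{C,2}$ as representations valued in $\gl_4(\bF_2)$. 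In characteristic $0$ the Asai lift preserves a symmetric bilinear form (the tensor of two alternating forms), whereas in characteristic $2$ the same form is alternating by the standard parity shift — reflecting precisely the symplectic nature of $\br_{C,2}$. Finally, Arthur's endoscopic classification for $\gs_4$, combined with a congruence / level-raising argument to pass from the orthogonal $\gl_4$-form ${\rm As}(g)$ to a genuine symplectic $\gs_4$-form sharing its mod $2$ Galois representation, yields the required Hilbert--Siegel Hecke eigen cusp form $h$ on $\gs_4(\A_F)$ of parallel weight $2$ with $\br_{h,2}\simeq \br_{C,2}$. Besides the modularity input, this final orthogonal-to-symplectic descent, enabled by the characteristic-$2$ parity anomaly, is the delicate point of the argument.
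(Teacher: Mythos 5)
Your reduction to a two-dimensional representation over the totally real quadratic subfield, and the use of a Serre-type mod $2$ modularity theorem (the paper invokes Sasaki) to produce a Hilbert eigenform $g$ with $\br_{g,2}\simeq\ot$, are in the right spirit. But the structural identification on which the rest of your argument rests is wrong for the case covered by the theorem. You claim that $\br_{C,2}|_{G_M}$ is the absolutely irreducible Steinberg module $V\otimes V^{(2)}$ and hence that $\br_{C,2}$ is the \emph{tensor} induction of $\ot$. That is the situation for the \emph{non-transitive} embedding $S_5\hookrightarrow S_6$ (type $S_5(b)$ in the paper's terminology, treated in~\cite{TY}), which is excluded by the hypothesis. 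For the transitive embedding assumed here, the restriction of $\br_{C,2}$ to the index-$2$ subgroup is \emph{reducible}: it is $\ot\oplus{}^{\iota}\ot$, and $\br_{C,2}\simeq{\rm Ind}^{G_F}_{G_L}\ot$ is an ordinary induction. One can see the dichotomy on traces: an element of order $3$ of type $(3,3)$ in the transitive $S_5$ acts on the $4$-dimensional module with trace $1+1=0$, matching $V\oplus V^{(2)}$, whereas $V\otimes V^{(2)}$ gives trace $1$. So your appeal to Steinberg's tensor product theorem presupposes an irreducibility of the restriction that fails here.

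Consequently the Asai (tensor-induction) lift is the wrong functorial transfer: on the Galois side it produces $\ot\otimes\ot^{(2)}$-type objects, not ${\rm Ind}^{G_F}_{G_L}\ot\simeq\br_{C,2}$, so the congruence $\overline{{\rm As}(\rho_g)}\simeq\br_{C,2}$ you assert does not hold in the transitive case. What is needed instead is automorphic induction from ${\rm GL}_2(\A_L)$ to ${\rm GSp}_4(\A_F)$, which the paper realizes unconditionally as a global theta lift from ${\rm GO}(4)$ via Roberts~\cite{R}; the induced representation of a $2$-dimensional symplectic representation is naturally symplectic, so no orthogonal-to-symplectic descent (and no ``characteristic-$2$ parity anomaly'') is required --- that final step of your argument is both unnecessary and unsubstantiated. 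A secondary gap: you pass directly from Sasaki's modularity (which a priori yields a form of parallel weight one) to a parallel weight $2$ form with the right central character; the paper needs a separate congruence argument (multiplication by the Hasse invariant, reduction of the unipotent $U_{1,2}$-action, and the Deligne--Serre lemma, as in Proposition~\ref{cong}) to achieve this, and your ``Serre weight compatible with parallel weight $2$'' remark does not supply it.
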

We give some remarks on the above theorem.
The transitivity of the image implies $f(x)$ is irreducible over $F$.  
Put $L=F(\sqrt{D_f})$. It is corresponding to the kernel of 
$\sgn\circ\iota_f\circ\br_{C,2}\colon G_F\lra \{\pm1\}$ where $\sgn\colon S_6\lra\{\pm1\}$ is the usual 
sign character. 
The assumption on $\br_{C,2}(c)$ shows 
$f(x)$ has only two real roots and the other four complex roots are permuted by an element of 
type $(2,2)$ in $S_6$. Therefore, $D_f\in F$ is totally real and it is not a square element 
since ${\rm Im }(\br_{C,2})\simeq S_5$. 
In conclusion, $L/F$ is a totally real quadratic extension. 
Then it will turn out that $\br_{C,2}$ is an induced representation to $G_F$ of 
a certain totally odd $2$-dimensional mod $2$ Galois representation $\ot\colon G_L\lra {\rm GL}_2(\bF_2)$ whose image is isomorphic to $A_5$.  Applying Sasaki's result~\cite{Sasaki} 
(or Pilloni--Stroh's result \cite{PS}),
the Jacquet--Langlands correspondence, and a suitable congruence method, one can find a
Hilbert cusp form $g$ of parallel weight 2 on 
${\rm GL}_2(\A_L)$ whose corresponding mod 2 Galois representation $\br_{g,2}$ is isomorphic to $\ot$. 
Then the theta lift of the corresponding automorphic representation $\pi_g$ to ${\rm GSp}_4(\A_F)$ 
yields a cuspidal automorphic representation $\Pi$ of ${\rm GSp}_4(\A_F)$ which is generated by a Hilbert--Siegel Hecke eigen cusp form $h$ of parallel weight $2$ on ${\rm GSp}_4(\A_F)$. By Theorem 1.1 of \cite{Weiss},  for each 
$p$ in a set of primes of Dirichlet density one, the Hodge--Tate weights at $p$ of the 
$p$-adic Galois representation attached to $\Pi$ are $\{0,0,1,1\}$. It is also true for $p=2$ when $\Pi_2$ is unramified and the roots of the $2$-nd Hecke polynomial of $\Pi$ 
are pairwise distinct (see Theorem 3.3 of \cite{Weiss}). 
On the other hand, the Hodge--Tate weights of $\rho_{C,2}$ at the place dividing $2$  are always $\{0,0,1,1\}$ and this is why we seek a form $h$ of such weight. 
 
We note that $J$ is potentially automorphic by Theorem 1.1.3 of~\cite{BCGP} and the potential version of Theorem~\ref{main1} is already known in a more general setting.
\begin{remark}\label{level-GT}
\begin{enumerate}
\item The construction of Hilbert-Siegel modular forms in Theorem \ref{main1} is 
similar to the one in \cite{TY}. However, to apply theta lifting to $\GSp_4$, we need to 
carefully look at the central characters after a congruence method 
$($see Proposition \ref{cong}$)$.
\item In the main theorem, we do not specify the levels of the Hilbert--Siegel forms  because of the lack of level-lowering results $($see Remark~\ref{level}$)$. 
However, $h$ could be a paramodular form by using the method of \cite{JR} 
at least when $F=\Q$ so that the situation is compatible with the paramodular 
conjecture \cite[Conjecture 1.4]{BK}. 
\item 
In the course of the proof for the main theorem, we do not use the results in~\cite{GeeT} to associate a form with a Galois representation though it is necessary to state Theorem~\ref{gal}.  Instead, we apply the unconditional result in~\cite{R} to
construct a corresponding automorphic cuspidal representation. Hence Theorem~\ref{main1} is true unconditionally. 
\item The central character of $h$ can be read off from 
Theorem \ref{gal-theta} and Proposition \ref{cong}. 
\item %Let $S_5(a)$ be a transitive subgroup of $S_6$. 
We can weaken the condition on the image of $\br_{C,2}$ by requiring that
the image be isomorphic to 
$F_{20}:=C_4\ltimes C_5$, or isomorphic to $A_5$ with $F/\Q$ of even degree. 
In the $F_{20}$ case, we have a similar cuspidal representation.
In the $A_5$ case, $\br_{C,2}$ is reducible so we can naively attach a non-cuspidal automorphic representation.
By using a congruence method, we may hope to obtain
a cuspidal representation in either case but we do not pursue it in this article. 
\end{enumerate}
\end{remark}

This paper is organized as follows.
In Section~\ref{sect:mod2}, we consider some basic facts about mod 2 Galois representations to ${\rm GSp}_4(\F_2)$.
We devote Section~\ref{sect:auto} to the study of the automorphic forms in question and to various congruences between
several types automorphic forms, and apply this to the proof of automorphy of our representations in~\ref{auto}.
Finally, in Section~\ref{sect:examples} we describe how to obtain explicit families of examples using two constructions: the first coming from a classical result of Hermite, and the second from $5$-division points on elliptic curves following Goins~\cite{G-thesis}.

\section{Certain mod 2 Galois representations to $\mathrm{GSp}_4(\F_2)$}\label{sect:mod2}
In this section, we study some properties of mod $2$ Galois representations to
${\rm GSp}_4(\F_2)$. 
We refer to~\cite[Section 5]{Bet} and \cite[Section 3]{TY}.
We denote by $S_n$ the $n$-th symmetric group, by $A_n$ the $n$-th alternating group, and by $C_n$ the cyclic group of order $n$.
\subsection{${\rm GSp}_4$}
Let us introduce the smooth group scheme 
$\mathrm{GSp}_4=\mathrm{GSp}_J$ over $\Z$ which is defined 
as the symplectic similitude group in $\mathrm{GL}_4$ associated to $J:=\begin{pmatrix} 0_2& s\\-s &0_2\end{pmatrix},\
s=\begin{pmatrix} 0& 1\\1 &0\end{pmatrix}$. Explicitly,
\[
  \mathrm{GSp}_4=\left\{X\in \mathrm{GL}_4\ |\ {}^{t}XJX=\nu(X)J,\ \exists \nu(X)\in \mathrm{GL}_1\right\}.
\]
Put $\mathrm{Sp}_4={\rm Ker}(\nu\colon\mathrm{GSp}_4\lra \mathrm{GL}_1,X\mapsto \nu(X))$. 

\subsection{An identification between ${\rm GSp}_4(\F_2)$ and $S_6$}\label{GSp4vsS6}
Let $s\colon\F^6_2\lra \F_2$ be the linear functional defined by $s(x_1,\ldots,x_6)=x_1+\cdots+x_6$. Put $V=\{x\in \F^6_2\ |\ s(x)=0\}$ and $W=V/U$ where $U=\langle (1,1,1,1,1,1) \rangle$. 
Let us consider the bilinear form on $\F^6_2$ given by the formula
\[
  \langle x,y \rangle=x_1y_1+\cdots +x_6y_6,\quad x,y\in \F^6_2.
\]
It induces a non-degenerate, alternating pairing $\langle \ast,\ast \rangle_W$ on $W$, where
being alternating means that $\langle x,x \rangle_W=0$ for each $x\in W$.
The symmetric group $S_6$ acts naturally on $\F^6_2$ and it yields a group homomorphism
\[
  \varphi\colon S_6\lra {\rm GSp}_{\F_2}(W,\langle \ast,\ast \rangle_W)\simeq  {\rm GSp}_4(\F_2).
\]
The action of $S_6$ on $W$ is faithful. In fact, we can check it only for 
$(12)$ and $(123456)$ generating $S_6$. By direct computation, 
\begin{equation*}
|{\rm GSp}_4(\F_2)|={\rm Sp}_4(\F_2)=720=|S_6|
\end{equation*}
and therefore, $\varphi$ is an isomorphism. 

It is easy to find a basis of $W$. 
For instance, we see that 
\begin{equation}\label{basis}
\begin{aligned}
  e_1=(1,1,0,0,0,0)&,\ e_2=(0,0,0,1,1,0), \\ 
  e_3=(0,0,0,1,0,1)&,\ e_4=(1,0,1,0,0,0)
\end{aligned}
\end{equation}
form a basis of $W$. The representation matrices with respect to the above basis 
for the generators $(12)$ and $(123456)$ of $S_6$ are given respectively as follows:
\begin{equation}\label{mrep}
\left(
\begin{array}{cccc}
 1 & 0 & 0 & 1 \\
 0 & 1 & 0 & 0 \\
 0 & 0 & 1 & 0 \\
 0 & 0 & 0 & 1 \\
\end{array}
\right),\ 
\left(
\begin{array}{cccc}
 1 & 1 & 1 & 0 \\
 1 & 1 & 0 & 1 \\
 0 & 1 & 1 & 0 \\
 1 & 0 & 0 & 1 \\
\end{array}
\right).
\end{equation}
\subsection{$S_5$-representations to ${\rm GSp}_4(\F_2)$} 
It follows from ~\cite[Section 5]{Bet} (in particular, the table in Lemma 5.1.7 in loc.cit.) 
that, up to conjugacy, there are two embeddings from $S_5$ to $\GSp_4(\F_2)\simeq S_6$ which are absolutely 
irreducible as representations to $\gl_4(\bF_2)$. 
We denote the images by $S_5(a), S_5(b)$ respectively; they are characterized in terms of the trace as follows:
\begin{enumerate}
\item $($type $S_5(a))$ the elements of order $3$ have trace $0$ and are of type $(3,3)$,
\item $($type $S_5(b))$ the elements of order $3$ have trace $1$ and are of type $(3)$.
\end{enumerate}
Under the identification $\GSp_4(\F_2)\simeq S_6$, these subgroups are given explicitly as 
\begin{equation}\label{exp-form}
  \begin{aligned}
    S_5(a)&=\big\langle \sigma_6:=(12346),\ \sigma_{2^3}:=(12)(34)(56) \big\rangle,\\
    S_5(b)&=\{\sigma\in S_6\ |\ \sigma(6)=6\}.
  \end{aligned}
\end{equation}
Notice that $\sigma_{2^3}(\sigma_6\sigma_{2^3}\sigma^{-1}_6)$ is of type $(3,3)$ and 
$(\sigma_6\sigma_{2^3})^2=(16)(35)$ is obviously of type $(2,2)$ which is explicitly given 
by $\left(
\begin{array}{cccc}
 0 & 1 & 1 & 0 \\
 1 & 1 & 1 & 1 \\
 0 & 1 & 1 & 1 \\
 1 & 0 & 1 & 0 \\
\end{array}
\right)$ with respect to the basis (\ref{basis}). By direct computation, one can check 
it is conjugate to $J$ by $\left(
\begin{array}{cccc}
 1 & 0 & 1 & 0 \\
 0 & 1 & 1 & 1 \\
 0 & 0 & 1 & 0 \\
 0 & 0 & 0 & 1 \\
\end{array}
\right)\in \GSp_4(\F_2)$. 

Let $G$ be a group and $\br\colon G\lra \GSp_4(\F_2)$ a representation of $G$. 
Since any element of $\GSp_4(\F_2)={\rm Sp}_4(\F_2)$ has determinant one, 
$\br$ is automatically self-dual. 
\begin{lemma}\label{rep-S5}Assume that ${\rm Im}(\br)\simeq S_5$. Then $\br$ is absolutely irreducible. 
Furthermore, there exist an index $2$ subgroup $H$ of $G$, a generator $\iota$ of $G/H$, and an absolutely irreducible representation
$\ot\colon H\lra {\rm GL}_2(\F_4)$ which is not equivalent to ${}^\iota \ot$ 
defined by ${}^\iota \ot(h)=\ot(\iota h\iota^{-1})$ for each $h\in H$  such that 
\begin{enumerate}
\item ${\rm Im}(\ot)={\rm SL}_2(\F_4)\simeq A_5${\rm ;}
\item if ${\rm Im}(\br)$ is of type $S_5(a)$, then $\br\simeq {\rm Ind}^G_H\ot${\rm ;}
\item if ${\rm Im}(\br)$ is of type $S_5(b)$, then $\br$ is isomorphic to  
a twisted tensor product with respect to $G/H$ $($see~\cite[Section 3]{TY}$)$.
\end{enumerate}
\end{lemma}
\begin{proof}
Put $H:=\br^{-1}(A_5)$. Clearly it is a subgroup of $G$ of index $2$.
%Since the order of ${\rm Im}(\br)$ is divisible by $5$, it follows from~\cite[Proposition 3.4]{TY} and~\cite[Lemma 5.1.7]{Bet} that
%$\br|_{H}$ is either reducible to two $2$-dimensional representations or irreducible. 
Since ${\rm Im}(\br)\simeq S_5$, by Dickson's result \cite[p.128,~(2.1)]{Wagner}, 
$\br$ is absolutely irreducible. 
Then by Clifford's theorem, it follows that $\br|_H$ is either 
absolutely irreducible (by Dickson's result again) or 
a sum of two 2-dimensional irreducible representations $\ot,\ot'\colon G\lra \gl_2(\bF_2)$ 
after base extension to $\bF_2$. 
In the latter case, by
Schur's Lemma and Frobenius reciprocity,
$$1\le\dim\Hom_H(\br|_H,\ot)=\dim\Hom_G(\br,{\rm Ind}^G_H\ot)\le 1,$$
therefore both have the same dimension and $\br$ is irreducible.
Further, irreducibility implies that $\tau$ cannot be isomorphic to ${}^\iota \ot$.  
Since $A_5\simeq {\rm SL}_2(\F_4)$ (cf. \cite[Section 3.3]{TY}), 
we may assume $\ot$ takes values in $\F_4$ as desired.  
Further, by using (\ref{mrep}) and (\ref{exp-form}), 
one can distinguish the types in terms of the trace. 

The remaining (absolutely irreducible) case follows from~\cite[Proposition 3.5-(1)]{TY}.
\end{proof}
Fix an isomorphism ${\rm GSp}_4(\F_2)\simeq S_6$ as in Section \ref{GSp4vsS6}. 
\begin{remark}\label{types}Keep the notation as above $($recall that ${\rm Im}(\br)\simeq S_5)$.
The type of the image ${\rm Im}(\br)$ can also be characterized in the following way:
\begin{enumerate}
\item $($type $S_5(a))$ it is a transitive subgroup of $S_6$;
\item $($type $S_5(b))$ it is not transitive, hence up to conjugacy, it fixes $6$ as a subgroup of $S_6$.
\end{enumerate}
%or equivalently,
%\begin{enumerate}
%\item $($type $S_5(a))$ it contains an element of type $(3,3)$ in $S_6$;
%\item $($type $S_5(b))$ it does not contain any elements of type $(3,3)$ in $S_6$. 
%\end{enumerate}
\end{remark}
In this article, we focus on the case of type $S_5(a)$; the other case is 
studied in~\cite{TY}.
\begin{proposition}\label{s5Galois}Let $F$ be a totally real field. Let $\br\colon G_F\lra {\rm GSp}_4(\F_2)$ be an irreducible  
mod 2 Galois representation. Suppose that ${\rm Im}(\br)$ is of type $S_5(a)$.  
Assume further that for each complex conjugation $c$ of $G_F$, $\br(c)$ is of type $(2,2)$. 
Then there exist a totally real quadratic extension $L/F$ with 
${\rm Gal}(L/F)=\langle \iota \rangle$  and an irreducible totally odd Galois 
representation $\ot\colon G_L\lra {\rm GL}_2(\F_4)$ which is not equivalent to ${}^\iota\ot$ satisfying 
\begin{enumerate}
\item ${\rm Im}(\ot)\simeq {\rm SL}_2(\F_4)\simeq A_5$;
\item $\br \simeq {\rm Ind}^{G_F}_{G_L}\ot$ 
as a representation to ${\rm GL}_4(\F_4)$;
\item  for each complex conjugation $c$ of $G_F$, $\ot(c)$ is conjugate to 
$s=\begin{pmatrix}
0 & 1  \\
1 & 0 
\end{pmatrix}$.
\end{enumerate}
Here the totally odd-ness for $\ot$ exactly means the third condition above.
\end{proposition}
\begin{proof}
Let $L/F$ be the quadratic extension corresponding to 
the sign character $\sgn\colon {\rm Im}(\br)\simeq S_5\lra \{\pm 1\}$.  
By assumption $L/F$ is a totally real quadratic extension and we have  
$\br(G_L)\simeq A_5$ (see the explanation right after Theorem \ref{main1}). 
The first two claims follow from Lemma~\ref{rep-S5}.
Further, by assumption, for each complex conjugation $c$ of $G_F$, 
$\br(c)$ is conjugate to $J$ (see the paragraph right after (\ref{exp-form})).
The third claim follows from this. 
\end{proof}

\section{Automorphy}\label{sect:auto}
Fix an embedding $\bQ\hookrightarrow \C$ and an isomorphism $\bQ_p\simeq \C$ 
for each prime $p$. 
Let $\iota=\iota_p\colon \bQ\lra\bQ_p$ be an embedding that is compatible with the maps  $\bQ\hookrightarrow \C$ and
$\bQ_p\simeq \C$ fixed above.
\subsection{Automorphic Galois representations}\label{agr}
Let $F$ be a totally real field. For each place $v$ of $F$, let $F_v$ be the completion of $F$ at $v$.
In this section we recall basic properties of cuspidal automorphic 
representations of ${\rm GSp}_4(\A_F)$ such that each infinite component is a 
(limit of) discrete series representation. We basically follow the notation of Mok's article~\cite{Mok} and add ingredients necessary for our purpose.

For any place $v$ of $F$, we denote by $W_{F_v}$ the Weil group of $F_v$. 
Let $m_1,m_2,w$ be integers such that
\begin{equation*}
  m_1>m_2\ge 0 \quad\text{and}\quad m_1+m_2\equiv w+1\pmod{2}.
\end{equation*}
Let $a=(m_1+m_2)/2$, $b=(m_1-m_2)/2$, and consider the $L$-parameter $\phi_{(w;m_1,m_2)}\colon W_\R\lra {\rm GSp}_4(\C)$ defined by
\begin{equation}\label{type}
\phi_{(w;m_1,m_2)}(z)=|z|^{-w}\diag\left(
\Big(\frac{z}{\overline{z}}\Big)^{a},
\Big(\frac{z}{\overline{z}}\Big)^{b},
\Big(\frac{z}{\overline{z}}\Big)^{-b},
\Big(\frac{z}{\overline{z}}\Big)^{-a}\right)
\end{equation}
% \begin{equation}\label{type}
% \phi_{(w;m_1,m_2)}(z)=|z|^{-w}
%   \begin{pmatrix}
%   \Big(\frac{z}{\overline{z}}\Big)^{\frac{m_1+m_2}{2}} \\
%   & \Big(\frac{z}{\overline{z}}\Big)^{\frac{m_1-m_2}{2}} \\
%   & & \Big(\frac{z}{\overline{z}}\Big)^{-\frac{m_1-m_2}{2}} \\
%   & & & \Big(\frac{z}{\overline{z}}\Big)^{-\frac{m_1+m_2}{2}}
%   \end{pmatrix}
% \end{equation}
and 
\[
\phi_{(w;m_1,m_2)}(j)=
\left(\begin{array}{cc}
0_2 & s \\
(-1)^{w+1} s & 0_2
\end{array}
\right)
,\quad s=\left(\begin{array}{cc}
0 & 1 \\
1 & 0
\end{array}
\right).
\]
The archimedean $L$-packet $\Pi(\phi_{(w;m_1,m_2)})$ corresponding to $\phi_{(w;m_1,m_2)}$ under the Local Langlands Correspondence
consists of two elements 
\begin{equation*}
\left\{\pi^H_{(w;m_1,m_2)},\ \pi^W_{(w;m_1,m_2)}\right\} 
\end{equation*}
whose central characters both satisfy $z\mapsto z^{-w}$ for $z\in \R^\times_{>0}$.  
These are essentially tempered unitary representations of ${\rm GSp}_4(\R)$ and tempered exactly when $w=0$. Let $K$ be a maximal compact subgroup of ${\rm Sp}_4(\R)$. 
When $m_2\ge 1$ (resp. $m_2=0$) the representation $\pi^H_{(w;m_1,m_2)}$ is called a (resp.\ limit of) holomorphic discrete series representation of
minimal $K$-type $\uk=(k_1,k_2):=(m_1+1,m_2+2)$ which corresponds to an algebraic representation 
$V_{\uk}:={\rm Sym}^{k_1-k_2}{\rm St}_2\otimes \det^{k_2}{\rm St}_2$ of $K_\C={\rm GL}_2(\C)$ where ${\rm St}_2$ stands for the 2-dimensional standard algebraic representation 
of $K_\C$. 
If $m_2\ge 1$ (resp. $m_2=0$), the representation $\pi^W_{(w;m_1,m_2)}$ is  called a (resp.\ limit of) large (or generic) discrete series representation of minimal $K$-type $V_{(m_1+1,-m_2)}$. 
We say that a cuspidal automorphic representation $\pi$ of ${\rm GSp}_4(\A_F)$ is 
regular if for each infinite place $v$, $\pi_v$ is a discrete series representation. 

Fix an integer $w$.  
Let $\pi=\otimes'_{v}\pi_v$ be an automorphic cuspidal representation of ${\rm GSp}_4(\A_F)$ such that 
for each infinite place $v$, $\pi_v$ has $L$-parameter $\phi_{(w;m_{1,v},m_{2,v})}$ with
the parity condition $m_{1,v}+m_{2,v}\equiv w+1\pmod{2}$. Let ${\rm Ram}(\pi)$ be the set of all
finite places at which $\pi_v$ is ramified. 

For the existence of Galois representations attached to 
regular algebraic self-dual cuspidal representations for general linear groups, 
see Theorem 1.1 of \cite{BGHT}, which is the main reference for knowing the main contributors
(Harris--Taylor, Shin, and others) as well as the latest results up to the time when loc.\ cit.\ appeared.
In order to attach Galois representations for $\rm GSp_4$, both Sorensen~\cite{So} and Mok~\cite{Mok}
use Langlands functoriality from $\rm GSp_4$ to $\rm GL_4$ and reduce the problem 
to the known cases for general linear groups (specifically $\rm GL_4$). 
Sorensen focused on the case when the representation at any infinite place is 
generic (large discrete series) while Mok treated the limit of discrete series via a congruence method.
Mok relied on results of Arthur that were at the time expected but not yet proved, and which are now 
guaranteed by~\cite{GeeT} though remaining conditional on Arthur's classification. 
We summarize here the known results taken from Theorem 3.1 
(for the regular case) and 
Theorem 4.14 of~\cite{Mok} 
(see also Theorem 3.1 and Theorem 3.3 of \cite{Weiss} as a good summary): 
\begin{thm}\label{gal}
Assume that $\pi$ is neither CAP nor endoscopic. 
For each prime $p$ and $\iota_p\colon \bQ_p\stackrel{\sim}{\lra} \C$ 
there exists a continuous, semisimple Galois representation 
$\rho_{\pi,\iota_p}\colon G_F\lra {\rm GSp_4}(\bQ_p)$ such that 
\begin{enumerate}
\item $\nu\circ \rho_{\pi,\iota_p}(c_\infty)=-1$ for any complex conjugation $c_\infty$ in $G_F$. 

\item $\rho_{\pi,\iota_p}$ is unramified at all finite places that do not belong to the set ${\rm Ram}(\pi)\cup\{v|p\}$;

\item for each finite place $v$ not lying over $p$, the local-global compatibility holds:
  \[
  {\rm rec}^{{\rm GT}}_v\left(\pi_v\otimes |\nu|^{-\frac{3}{2}}\right)\simeq 
    \begin{cases}
  {\rm WD}\left(\rho_{\pi,\iota_p}|_{G_{F_v}}\right)^{F-{\rm ss}} & \text{if $\pi$ is regular,} \\
   {\rm WD}\left(\rho_{\pi,\iota_p}|_{G_{F_v}}\right)^{{\rm ss}} & \text{otherwise,}
    \end{cases}
  \]
with respect to $\iota_p$, where ${\rm rec}^{{\rm GT}}_v$ stands for the local Langlands correspondence
constructed by Gan--Takeda~\cite{GT};

\item for each $v|p$ and each embedding $\sigma\colon F_v\hookrightarrow \bQ_p$, let
$v_\sigma\colon F\hookrightarrow \C$ denote the embedding $v_\sigma:=\iota_p\circ \sigma|_F$. 
Then the representation $\rho_{\pi,\iota_p}|_{G_{F_v}}$ has Hodge--Tate weights
\[
HT_\sigma\left(\rho_{\pi,\iota_p}|_{G_{F_v}}\right)=\left\{\delta_{v_\sigma},\delta_{v_\sigma}+m_{2,v_\sigma},
\delta_{v_\sigma}+m_{1,v_\sigma},\delta_{v_\sigma}+m_{2,v_\sigma}+m_{1,v_\sigma} \right\}
\]
where $\delta_{v_\sigma}=\ds\frac{1}{2}(w+3-m_{1,v_\sigma}-m_{2,v_\sigma})$; 
\item further, 
\begin{enumerate}
\item if $\pi_{v'}$ is discrete series for all infinite places $v'$, then 
for each finite place $v$ of $F$, 
$\rho_{\pi,\iota_p}|_{G_{F_v}}$ is of de Rham and the local-global compatibility also holds 
up to Frobenius semi-simplification; 
\item for each finite place $v$ such that $\pi_v$ is unramified and the Satake parameters 
of $\pi$ are distinct from each other, $\rho_{\pi,\iota_p}|_{G_{F_v}}$ is crystalline and the local-global compatibility also holds up to semi-simplification.
\end{enumerate}
\end{enumerate}
\end{thm}
\begin{remark}\label{rmk-gal}
\begin{enumerate}
\item Regarding the fourth statement of the above theorem, when $\pi$ is not regular, 
in general, we do not even know if $\rho_{\pi,\iota_p}|_{G_{F_v}}$ 
is Hodge-Tate for each $v|p$ such that $\pi_{v_\sigma}$ is not discrete series. 
We remark that the case $($b$)$ of the fifth claim follows from \cite[Theorem 3.3-(vi)]{Weiss}.  
{\rm (}see Remark 3.4 of \cite{Weiss}{\rm )}. 
\item A priori, by construction, $\rho_{\pi,\iota_p}$ takes values in $\GL_4(\bQ_p)$. 
However, by \cite[Corollary 1.3]{BC}, it factors through $\GSp_4(\bQ_p)$ as stated.  
\item Notice $3-k_1-k_2=-(m_1+m_2)$. Regarding Theorem \ref{gal}-(3),  
if we put $w=0$  and consider the twist 
$\pi_v\otimes |\nu|^{-\frac{3}{2}}\otimes |\nu|^{\delta_v}=\pi_v\otimes |\nu|^{\frac{3-k_1-k_2}{2}}$ which yields the setting in \cite[Theorem 3.1-(vii)]{Weiss}. 
Accordingly, our Hodge-Tate weights are also shifted by $-\delta_{v_\sigma}$ and they 
give the Hodge-Tate weights in \cite[Theorem 3.1-(v)]{Weiss}.  
\end{enumerate}
\end{remark}

\begin{definition}\label{auto-gal} 
\begin{enumerate}
\item
Let $\rho\colon G_F\lra {\rm GSp}_4(\bQ_p)$ be a $p$-adic Galois representation. 
We say $\rho$ is automorphic if there exists a cuspidal automorphic representation $\pi$ of 
${\rm GSp}_4(\A_F)$ with $\pi_v$ a $($limit of$)$ discrete series representation for any $v|\infty$, and such that
$\rho\simeq\rho_{\pi,\iota_p}$ as a representation to ${\rm GL}_4(\bQ_p)$. 
\item  Let $\br\colon G_F\lra {\rm GSp}_4(\bF_p)$ be an irreducible mod $p$  Galois representation. 
We say $\br$ is automorphic if there exists a cuspidal automorphic representation $\pi$ of 
${\rm GSp}_4(\A_F)$ with $\pi_v$ a $($limit of$)$ discrete series representation for any $v|\infty$, and such that
$\br\simeq \br_{\pi,\iota_p}$ as a representation to ${\rm GL}_4(\bF_p)$.
\end{enumerate}
\end{definition} 
\begin{remark}\label{switch}
Let $h$ be a holomorphic Hilbert--Siegel Hecke eigen cusp form on ${\rm GSp}_4(\A_F)$ of parallel weight $2$.
Let $\pi_h$ be the corresponding cuspidal automorphic representation of ${\rm GSp}_4(\A_F)$. 
Then for each infinite place $v$, the local Langlands parameter at $v$ is given by $\phi_{(w;1,0)}$ and 
$\pi_{h,\infty}$ is a limit of holomorphic discrete series. 
Conversely, if a cuspidal automorphic representation $\pi$ of $\GSp_4(\A_F)$ is neither CAP nor endoscopic and 
its local Langlands parameter at $v|\infty$ is given by $\phi_{(w;1,0)}$ for some integer $w$, 
one can associate such a form $h$ by using~\cite{GeeT}.
Note that the results in~\cite{GeeT} are conditional on the trace formula
$($see the second paragraph on p.472 of~\cite{GeeT}$)$.
However, in the course of the proof of Theorem~\ref{main1}, we apply the unconditional result in~\cite{R} to
construct a corresponding automorphic cuspidal representation. Hence Theorem~\ref{main1} is true unconditionally.
\end{remark}

\subsection{Hilbert modular forms and the Jacquet--Langlands correspondence}\label{PNPJL}
We refer to~\cite[Section 1]{Taylor1} and~\cite[Section 3]{Kisin} for the theory of
($p$-adic) algebraic modular forms corresponding to (paritious) Hilbert modular forms via the Jacquet--Langlands correspondence.

In this section, $p$ is any rational prime but we remind the reader that we will later 
consider $p=2$.
Let $M$ be a totally real field of even degree $m$. 
For each finite place $v$ of $M$, let $M_v$ be 
the completion of $M$ at $v$, $\O_v$ its ring of integers, $\varpi_v$ a uniformizer of $M_v$, and $\F_v$ the
residue field of $M_v$.   
Let $D$ be the quaternion algebra with center $M$ which is
ramified exactly at all the infinite places of $M$ and $\O_D$ be the ring of integral quaternions of $D$.
For each finite place $v$ of $M$, we fix an 
isomorphism
\begin{equation*}
\iota_v\colon D_v:=D\otimes_{M}M_v\simeq {\rm M}_2(M_v).
\end{equation*}
We view $D^\times$ as an algebraic group over $M$ so that for any $M$-algebra $A$, $D^\times(A)$ outputs 
$(D\otimes _{M}A)^\times$ and similarly as an algebraic group scheme over $\O_M$ such that
$D^\times (R)=(\O_D\otimes_{\O_M}R)^\times$ for any $\O_M$-algebra $R$.  

Let $K$ be a finite extension of $\Q_p$ contained in $\bQ_p$ with residue field $k$ and ring of integers $\O_K$, and
assume that $K$ contains the images of all embeddings $M\hookrightarrow \bQ_p$.

For each finite place $v$ of $M$ lying over $p$, let $\tau_v$ be  a smooth representation of
${\rm GL}_2(\mathcal{O}_{v})$ acting on a finite free 
$\O_K$-module $W_{\tau_v}$. We also view it as a representation of $D^\times_v$ via 
$\iota_v$.
Put $\tau:=\ds\otimes_{v|p}\tau_v$ which is a representation of ${\rm GL}_2(\mathcal{O}_{p}):=\prod_{v|p}\GL_2(\O_v)$ 
acting on $W_\tau:=\otimes_{v|p}W_{\tau_v}$. Suppose 
$\psi\colon M^\times\bs (\A^\infty_M)^\times\lra \mathcal{O}^\times_K$ is a continuous character so that 
for each $v|p$, $Z_{D^\times}(\mathcal{O}_{v})\simeq \mathcal{O}^\times_{v}$ acts on 
$W_{\tau_v}$ by $\psi^{-1}|_{\mathcal{O}^\times_{M_v}}$ where $Z_{D^\times}\simeq GL_1$ is the center of $D^\times$
as a group scheme over $\O_M$. Note that we put the discrete topology on 
$\mathcal{O}^\times_K$ and 
therefore, such a character is necessarily of finite order. Let $U=\prod_v U_v$ be a compact open subgroup 
of $D^\times(\A^\infty_{M})\simeq {\rm GL}_2(\A^\infty_{M})$ such that $U_v\subset D^\times({O}_{M_v})$ 
for all finite places $v$ of $M$.
Put $U_p:=\prod_{v|p}U_v$ and $U^{(p)}=\prod_{v\nmid p}U_v$. 
For any local $\mathcal{O}_K$-algebra $A$ put $W_{\tau,A}:=
W_\tau\otimes_{\mathcal{O}_K} A$. Let 
$\Sigma$ be a finite set of finite places of $M$.
For each $v\in \Sigma$, let $\chi_v\colon U_v\lra A^\times$ be a quasi-character. Define 
$\chi_{\Sigma}\colon U\lra A^\times$ whose local component is $\chi_v$ if $v\in \Sigma$, the trivial representation otherwise.

\begin{definition}[$p$-adic algebraic quaternionic forms]\label{dfn-2AMF}
Let $S_{\tau,\psi}(U,A)$ denote the space of functions
\begin{equation*}
f\colon D^\times\bs D^\times(\A^\infty_{M}) \lra W_{\tau,A}
\end{equation*}
such that 
\begin{itemize}
\item $f(gu)=\tau(u_p)^{-1}f(g)$ for $u=(u^{(p)},u_p)\in U=U^{(p)}\times U_p$ and any $g\in D^\times(\A^\infty_{M})$;
\item $f(zg)=\psi(z)f(g)$ for $z\in Z_{D^\times}(\A^\infty_M)$ and $g\in D^\times(\A^\infty_{M})$.
\end{itemize}
Similarly, 
let $S_{\tau,\psi,\chi_{\Sigma}}(U,A)$ denote the space of functions
\begin{equation*}
f\colon D^\times\bs D^\times(\A^\infty_{M})\lra W_{\tau,A}
\end{equation*}
such that 
\begin{itemize}
\item $f(gu)=\chi^{-1}_{\Sigma}(u)\tau(u_p)^{-1}f(g)$ for $u=(u^{(p)},u_p)\in U=U^{(p)}\times U_p$ and 
$g\in D^\times(\A^\infty_{M})$;
\item $f(zg)=\psi(z)f(g)$ for $z\in Z_{D^\times}(\A^\infty_M)$ and $g\in D^\times(\A^\infty_{M})$.
\end{itemize}
We call a function belonging to these spaces a $p$-adic algebraic quaternionic form.
\end{definition} 

Let $S$ be a finite set of finite places of $M$ containing 
all places $v\nmid p$ such that $U_v\neq D^\times(\mathcal{O}_{v})$.  
We define the (formal) Hecke algebra 
\begin{equation}\label{fha1}
\mathbb{T}^S_A:=A[T_{v},S_v]_{v\not\in S\cup\{v|p\}}
\end{equation}
where
\begin{align*}
  T_{v}&=[D^\times(\mathcal{O}_{v})\iota^{-1}_{v}(\diag(\pi_v,1))D^\times(\mathcal{O}_{v})],\\
  S_v&=[D^\times(\mathcal{O}_{v})\iota^{-1}_{v}(\diag(\pi_v,\pi_v))D^\times(\mathcal{O}_{v})]
\end{align*}
are the usual Hecke operators.
It is easy to see that both of $S_{\tau,\psi}(U,A)$ and $S_{\tau,\psi,\chi_{\Sigma}}(U,A)$ have a natural action of $\mathbb{T}^S_A$ (cf.~\cite[Definition 2.2]{Dem}).

Let $U=U^{(p)}\times U_p$ be as above. As explained in~\cite[Section 1]{Taylor1}, if we write
${\rm GL}_2(\A^\infty_M)=\coprod_i D^\times t_i UZ_{D^\times}(\A^\infty_M)$, then
\[
  S_{\tau,\psi}(U,A)\simeq \bigoplus_i W^{(UZ_{D^\times}(\A^\infty_M)\cap t^{-1}_{i}D^\times t_i)/M^\times}_\tau.
\]
The group $(UZ_{D^\times}(\A^\infty_M)\cap t^{-1}_{i}D^\times t_i)/M^\times$ is trivial for all $t$ when
$U$ is sufficiently small (see~\cite[p.623]{Kisin}). Henceforth, we keep this condition until the end of
the section. It follows from this that the functor $W_\tau \mapsto S_{\tau,\psi}(U,A)$ is exact 
(cf.~\cite[Lemma 3.1.4]{Kisin-finite}).

Fix an isomorphism $\iota\colon \bQ_p\simeq \C$. Let
\[
  S_{\tau,\psi}(U_p,A):=\varinjlim_{U^{(p)}}S_{\tau,\psi}\left(U^{(p)}\times U_p,A\right)
\]
where $U^{(p)}$ tends to be small. 

We consider $(\underline{k},\underline{w})=
((k_\sigma)_\sigma,(w_\sigma)_\sigma)\in \Z^{{\rm Hom}(M,\bQ_p)}_{>1}\times \Z^{{\rm Hom}(M,\bQ_p)}$ 
with the property that $k_\sigma+2\omega_\sigma$ is independent of $\sigma$. Set $\delta:=k_\sigma+2w_\sigma-1$ for some (any) $\sigma$.
A special case we will encounter later is $(\underline{k},\underline{w})=(k\cdot\textbf{1},w\cdot\textbf{1})$ with $k\in\Z_{>1}$, $w\in\Z$, and $\mathbf{1}=(1,\dots,1)\in\Z^{{\rm Hom}(M,\bQ_{p})}$, where $\delta=k+2w-1$.
Let $\psi_\C\colon M^\times\bs \A^\times_M\lra \C^\times$ be 
the character defined by 
\begin{equation}\label{psi}
  \psi_\C(z)=\iota\left(N(z_p)^{\delta-1}\psi(z^\infty)\right)N(z_\infty)^{1-\delta}
\end{equation} for 
$z=(z^{(p)},z_p,z_\infty)\in  \A^\times_M$ where the symbol $N$ stands for the norm. 
For each $\sigma\in {\rm Hom}(M,\bQ_p)$, there exists a unique pair of $v|p$ and an embedding $\sigma_v\colon M_v\lra \bQ_p$ such that $\sigma_v|_M=\sigma$. Therefore, we can rewrite $(\sigma)_{\sigma \in {\rm Hom}(M,\bQ_p)}=(\sigma_v)_
{v|p,\ \sigma_v\in {\rm Hom}(M_v,\bQ_p)}$.  

Since $w_\sigma\in \Z$ for each $\sigma$, we can define the algebraic representation 
$\tau_{(\underline{k},\underline{w}),A}$ of 
${\rm GL}_2(\O_p)=\prod_{v|p}{\rm GL}_2(\O_v)$ by 
\begin{equation}\label{alg-rep}
\tau_{(\underline{k},\underline{w}),A}=\bigotimes_{v|p}\bigotimes_{\sigma_v\in {\rm Hom}(M,\bQ_p)}
{\rm Sym}^{k_{\sigma_v}-2}\,{\rm St}_2(A)\otimes {\det}^{w_{\sigma_v}}A
\end{equation}
where ${\rm St}_2$ is the standard representation of dimension two. We often drop the subscript $A$ from 
$\tau_{(\underline{k},\underline{w}),A}$ which should not cause any confusion.
Notice that $\tau_{(\underline{k},\underline{w}),\O_K}\otimes_{\O_K,\iota}\C$ is the algebraic representation of ${\rm GL}_2(\C)$ of
highest weight $(\underline{k},\underline{w})$ so that the center acts by $z\mapsto z^{\delta-1},\ \delta=k_\sigma+2w_\sigma-1$ for 
$z\in \C^\times$. We write 
\begin{equation}\label{n-cl}
S_{\underline{k},\underline{w},\psi}(U_p,A):=
S_{\tau_{(\underline{k},\underline{w})},\psi}(U_p,A)
\end{equation}
for simplicity. 

By~\cite[Corollary 1.2, Lemma 1.3-2]{Taylor1}, we have an isomorphism
\begin{equation}\label{classical}
\left(S_{\underline{k},\underline{w},\psi}(U_p,\O_K)/
S^{{\rm triv}}_{\underline{k},\underline{w},\psi}(U_p,\O_K)\right)\otimes_{\O_K,\iota}\C\simeq \bigoplus_{\pi}\pi^{\infty,p}\otimes \pi^{U_p}_p
\end{equation}
of $D^\times(\A^{p,\infty})$-modules,
where $\pi$ runs over the regular algebraic cuspidal automorphic representations of ${\rm GL}_2(\A_M)$ such that
$\pi$ has central character $\psi_\C$ and $S^{{\rm triv}}_{\underline{k},\underline{w},\psi}(U_p,\O_K)$ is defined to be
zero unless $(\underline{k},\underline{w})=(2\cdot\textbf{1},w\cdot\textbf{1})$, in which case we define it to be the subspace of
$S_{\tau_{(\underline{k},\underline{w})},\psi}(U_p,\O_K)$ consisting of functions that factor through the reduced norm of $D^\times(\A_F)$.

For $\tau_{(\underline{k},\underline{w}),\C}$ as above we can also consider the space of Hilbert cusp forms on ${\rm GL}_2(\A_M)$ of level $U$ and of weight $(\underline{k},\underline{w})$ and
their geometric counterparts (see~\cite[Section 1.5]{Di}).
Thanks to many contributors (see~\cite{Jarvis} and the references there), to each (adelic) Hilbert
Hecke eigen cusp form $f$ of weight $(\underline{k},\underline{w})$ and level $U$, one can attach
an irreducible $p$-adic Galois representation $\rho_{f,\iota_p}\colon G_M\lra {\rm GL}_2(\bQ_p)$ for any rational prime $p$
and fixed isomorphism $\bQ_p\simeq \C$.
The construction also works for $\underline{k}\ge \textbf {1}$ (in the lexicographic order). In particular, in the case when $\underline{k}=\textbf{1}$ (parallel weight one),
its image is finite and it gives rise to an Artin representation $\rho_f\colon G_M\lra {\rm GL}_2(\C)$ (see~\cite{RT}).
Taking a suitable integral lattice, we consider the reduction $\br_{f,p}\colon G_M\lra {\rm GL}_2(\bF_p)$ of
$\rho_{f,p}$ or $\rho_f$ modulo the maximal ideal of $\bZ_p$. 

Let $F$ be a totally real subfield of $M$ such that $[M\colon F]=2$. 
This is possible since $M/\Q$ is a Galois extension of even degree. 
Let $s$ be the generator of ${\rm Gal}(M/F)$. Put $S_M:=\Hom_\Q(M,\bQ)$ and 
let $S^{(1)}_M$ be a subset such that $S_M=S^{(1)}_M\coprod S^{(2)}_M$ where 
$S^{(2)}_M:=S^{(1)}_M\circ s$. 
Further, we identify $S^{(1)}_M$ with $S_F=\Hom_\Q(F,\bQ)$ via $\sigma\mapsto \sigma|_{F}$. 

Let ${}^s f$ be the twist of $f$ by $s$ 
which is defined by the composition of $f$ and 
the automorphism of $D^\times\bs D^\times(\A^\infty_{M})$ induced from $s$. 
We denote by $\omega_{{}^sf}$ the central character, which satisfies $\omega_{{}^sf}={}^s\omega_f$. 
Let $\pi_f$ (resp. $\pi_{{}^sf}$) be the corresponding cuspidal representation attached to 
$f$ (resp. ${}^sf$).  

Changing the subset $S^{(1)}_M$ if necessary, 
we may write $\uk=(k_\tau)_{\tau\in S_M}$ as 
$\uk=(k_{1,\sigma},k_{2,\sigma})_{\sigma\in S^(1)_M}$ 
with $k_{1,\sigma}\ge k_{2,\sigma}$ for each $\sigma\in S^{(1)}_M$, 
where $k_\tau=k_{1,\tau}$ if $\tau\in S^{1}_M$ and $k_\tau=k_{2,\sigma}$ if 
$\tau=\sigma\circ s\in S^{(2)}_M$.

Applying \cite{R},\cite{BR},\cite{Skinner}, we have the following unconditional theorem:
\begin{thm}\label{gal-theta}Keep the notation as above. 
Suppose $\uk=(k_1\cdot\textbf{1},k_2\cdot\textbf{1})$ is in $\Z^{S^{(1)}_M}\times
\Z^{S^{(2)}_M}$ for some integers $k_1\ge k_2\ge 2$ with $k_1\equiv k_2\ {\rm mod}\ 2$. 
For each $\sigma\in S^{(1)}_M$, put $w_\sigma=0$ and 
$w_{\sigma\circ s}=\frac{k_1-k_2}{2}$. 
  Further, suppose $\pi_{{}^sf}$ is not isomorphic to $\pi_f$ (equivalently, $f$ does not come from 
  any Hilbert cusp form on $\GL_2(\A_F)$ via base change) and suppose 
the central character $\omega_f$ satisfies $\omega_{{}^s f}=\omega_f$.   
Then, there exists a Hilbert--Siegel Hecke eigen cusp form $h$ on $\GSp_4(\A_F)$ 
such that the corresponding cuspidal automorphic representation $\Pi=\Pi_h$ 
satisfies the following:
\begin{enumerate}
\item for each infinite place $v\in S_F$ and $\sigma\in S^{(1)}_M$ such that 
$\sigma|_{F}=v$, 
the L-parameter of  $\Pi_v$ is given by 
$\phi_{(-k_{1,\sigma};m_{1,\sigma},m_{2,\sigma})}\otimes |\cdot|$ with 
    \begin{equation*}
m_{1,\sigma}=\ds\frac{k_{1,\sigma}+k_{2,\sigma}-2}{2},\quad
m_{2,\sigma}=\ds\frac{k_{1,\sigma}-k_{2,\sigma}}{2};
    \end{equation*}
\item 
for each $p$ and $\iota_p$, there exists an irreducible $p$-adic Galois representation 
$\rho_{h,\iota_p}\colon G_F\lra \GSp_4(\bQ_p)$ enjoying the following properties:
\begin{enumerate}
\item $\rho_{h,\iota_p}={\rm Ind}^{G_F}_{G_M}\rho_{f,\iota_p}${\rm ;}
\item $\rho_{h,\iota_p}$ satisfies {\rm (1),(2),(3)} of Theorem \ref{gal}{\rm ;}
\item for each finite place $v$ of $F$ dividing $p$, 
$\rho_{h,\iota_p}|_{G_{F_v}}$ is de Rham and it satisfies 
the local-global compatibility up to Frobenius semi-simplification.  Further, it has  
the Hodge-Tate weights as in {\rm (4)} of Theorem \ref{gal}. 
\end{enumerate}
\end{enumerate}
\end{thm}
\begin{proof}The condition on the central character is equivalent to 
$\omega_f$ factoring through the norm map $N_{M/F}$ via a (Hecke) character of 
$\A^\times_F$ (cf. \cite[Lemma 5.2]{Getz}).  Applying \cite[p.251, Theorem 8.6-(1)]{R}, 
the theta lifting from $f\otimes |\det|^{-1}_{\A_M}$ gives an automorphic representation 
$\Pi'$ of $\GSp_4(\A_F)$ such that for each $v\in S_M$, $\Pi'_v$ is a 
(limit) of holomorphic discrete series 
 and $\Pi=\Pi'\otimes |\det|$ is the desired one 
 (the assumption on $\pi_f$ guarantees $\Pi'$ (or $\Pi$) is, in fact, cuspidal). 
For each $\sigma\in S^{(1)}_M$, by the parameter in \cite[p.71, Definition]{BR}, 
the local Langlands parameters of   
$\pi_{f,\sigma}\otimes |\det|^{-1},\ \pi_{f,\sigma\circ s}\otimes |\det|^{-1}$ are given, respectively, by
\begin{align*}
  |z|^{-k_1}&\diag\Big((z/\bar{z})^{\frac{k_{1,\sigma}-1}{2}},
(z/\bar{z})^{-\frac{k_{1,\sigma}-1}{2}}\Big),\\
  |z|^{-k_1}&\diag\Big((z/\bar{z})^{\frac{k_{2,\sigma}-1}{2}},
(z/\bar{z})^{-\frac{k_{2,\sigma}-1}{2}}\Big),
\end{align*}
where $z\in \C\subset W_{M_\sigma}$.
By (\ref{type}), we see the local $L$-parameter of $\Pi'_{\sigma}$ 
(resp. $\Pi_\sigma$) is 
given by $\phi_{(-k_{1,\sigma};m_{1,\sigma},m_{2,\sigma})}$ (resp. 
$\phi_{(-k_{1,\sigma};m_{1,\sigma},m_{2,\sigma})}\otimes|\det|$). 
Therefore, we have the  first claim. 

For the rest, we have only to check the claim (2)-(a),(c) since 
the other one follows from Theorem \ref{gal}. 
The claim (2)-(a) follows from the matching of 
the Satake parameters at unramified places and Chebotarev density.  
By \cite[Theorem 1 with the footnote 7 in p.252]{Skinner}, $\rho_{f,\iota_p}|_{G_{F_v}}$ 
is de Rham and so is $\rho_{h,\iota_p}|_{G_{F_v}}$. Hence we have the claim (2)-(c). 
\end{proof}
\begin{remark}\label{lgc}
The claim (c) of Theorem \ref{gal-theta} above is much stronger than (5) of Theorem \ref{gal} 
when $k_1=k_2$ because as explained in the proof, the local-global compatibility holds 
for any Hilbert modular forms and any finite places by \cite[Theorem 1]{Skinner}. 
\end{remark}

The following proposition is a minor modification of \cite[Corollary 2.12]{BDJ}. 
The proof itself is well-known to experts but we give the details for completeness; we also 
look carefully at what happens to the central characters in the process of the congruence method. 
\begin{proposition}\label{cong}
  Let $p=2$.
Let $M$ be a totally real field of even degree $m$ over $\Q$. 
Let $f$ be a Hilbert Hecke eigen cusp form on ${\rm GL}_2(\A_M)$ of parallel weight one such that
$\br_{f,\iota_2}$ is irreducible. There exists a Hilbert Hecke eigen cusp form $g$ on ${\rm GL}_2(\A_M)$ such that
\begin{enumerate}
\item $\br_{f,\iota_2}\simeq \br_{g,\iota_2}\otimes \psi$ for some continuous character $\psi\colon G_M\lra \bF^\times_2$; 
\item the character corresponding to the central character of $g$ under (\ref{psi}) 
is trivial; 
\item $g$ is of weight $(\underline{2},\underline{0})$ with 
$\underline{2}:=2\cdot \textbf{1}$  $($parallel weight 2$)$. 
Here we write $\underline{0}=(0,\ldots,0)\in \Z^m$.   
\end{enumerate}    
Further, if $\det(\br_{f,\iota_2})$ is trivial, then 
$\psi$ can be taken to be trivial. 
\end{proposition}
\begin{proof}Let $U$ be a sufficiently small open compact subgroup of ${\rm GL}_2(\A_M)$ that fixes $f$.
Since $f$ is of parallel weight one, we have $(\underline{k},\underline{w})=(\underline{1},w\cdot\underline{1})$ for some $w\in\Z$. By twisting 
if necessary, we may assume $w=0$. 
Let $K$ be a finite extension of $\Q_2$ in $\bQ_2$ with ring of integers $\O_K$ such that $K$ contains all Hecke eigenvalues of $f$ for $\mathbb{T}^S_{\O_K}$. Let $\F$ be the residue field of $K$.
As in~\cite[Section 1.5]{Di}, we can view $f$ as a geometric Hilbert modular form over $\O_K$ via a classical Hilbert modular form
associated to $f$ and 
consider its base change $\overline{f}$ to $\F$. After multiplying by a high enough power of the Hasse invariant, we
have another form $\overline{g}_1$ of weight $(\underline{k},\underline{0})$ with 
$\underline{k}=k\cdot \textbf{1},\ k\gg 0$ such that
\begin{enumerate}
\item $\overline{h}=\overline{g}_1$;
\item $\overline{g}_1$ is liftable to
a geometric Hilbert Hecke eigenform $g_1$ over $\O_K$ by enlarging $K$ if necessary;
\item $\br_{g_1,\iota_2}\simeq \br_{f,\iota_2}$.   
\end{enumerate}
We view $g_1$ as a classical Hilbert Hecke eigen cusp form and by the Jacquet--Langlands correspondence,
we have a $2$-adic algebraic quaternionic form $h_1$ in $S_{\tau_{(\underline{k},\underline{0})},\psi}(U,\O_K)$ corresponding to $g_1$;
here $\psi\colon M^\times\bs \A^{\infty}_M\lra \O^\times_K$ is the finite character corresponding to the central
character of $g_1$ under (\ref{psi}). For each finite place $v$ of $M$ lying over $2$,
let $U_{1,v}$ be the subgroup of $U_v$ consisting of all elements congruent to 
$\begin{pmatrix}
1 & \ast  \\
0 & 1 
\end{pmatrix}$. Put $U_{1,2}:=\prod_{v|2}U_{1,v}$. By definition, $\overline{h}_1\in 
S_{\tau_{(\underline{k},\underline{0})},\psi}(U^{(2)}\times U_{1,2},\F)$. 
Since $U_{1,2}$ acts on $\tau_{(\underline{k},\underline{0}),\F}$ unipotently, 
it can be written as a successive extension of the trivial representation of  $U_{1,2}$.
This successive extension commutes with the Hecke action and one can find
a Hecke eigenform $\overline{h}_2\in S_{\tau_{(\underline{2},\underline{0})},\psi}(U^{(2)}\times U_{1,2},\F)$.
Further, notice that $(\F^\times)^2=\F^\times$ and by twisting, we may assume that the reduction 
$\overline{\psi}$ of $\psi$ is trivial. 
Since $U$ is taken to be sufficiently small, there exists a lift 
$h_2\in S_{\tau_{(\underline{2},\underline{0})},\psi^{{\rm triv}}}(U^{(2)}\times U_{1,2},\O_K)$ of $\overline{h}_2$ 
where $\psi^{{\rm triv}}$ stands for the trivial character. In fact, the Hecke eigen system of $\overline{h}_2$ is 
liftable to $\O_K$ by the Deligne--Serre Lemma~\cite[Lemma 6.11]{DS} and 
by multiplicity one, we can recover a Hecke eigenform $h_2$ from the Hecke eigensystem.   
Further $h_2$ is non-trivial since $\ot$ is absolutely irreducible.
Therefore, by~\cite[Lemma 1.3-2]{Taylor1} the form $g$ is taken to be the image of $h_2$ under the Jacquet--Langlands correspondence. 

The remaining claim on the central character is obvious by construction. 
\end{proof}

\subsection{Mod $2$ automorphy}\label{auto}
In this section we prove Theorem~\ref{main1}.
\begin{proof}By assumption and Lemma~\ref{rep-S5},
${\rm Im}(\br_{C,2})$ is of type $S_5(a)$. Applying Proposition~\ref{s5Galois}
there exist a totally real quadratic extension $L/F$ and an absolutely irreducible totally odd 
two-dimensional Galois representation $\ot\colon G_L\lra {\rm GL}_2(\F_4)$ such that
$\br\simeq {\rm Ind}^{G_F}_{G_L}\ot$. Notice that $\ot$ is not equivalent to ${}^\iota \ot$ for 
any lift $\iota$ of the generator of ${\rm Gal}(L/F)$ and ${\rm Im}(\ot)={\rm SL}_2(\F_4)$.  
Since ${\rm SL}_2(\F_4)\simeq A_5$ we have an embedding ${\rm SL}_2(\F_4)
\hookrightarrow {\rm PGL}_2(\C)$. Let $W=W(\F_4)$ be the ring of Witt vectors of 
$\F_4$ and take a sufficiently large finite extension $K/\Q_2$ containing $W$. 
Then by using an explicit realization of the complex representation of $A_5$, 
we have an embedding ${\rm SL}_2(\F_4)
\hookrightarrow {\rm PGL}_2(\O_K)$. Applying Tate's theorem \cite[Section 6]{Serre}, 
we have a representation $\tau'\colon G_L\lra \GL_2(\O_K)$ whose reduction modulo the maximal 
ideal $m_K$ of $\O_K$ is a twist of $\ot\otimes_{\F_4}\O_K/m_K$ by 
a finite character of $G_L$. It follows from this that $\tau'$ is totally odd and 
the projective image is isomorphic to $A_5$. 
By~\cite[Theorem 2]{Sasaki} or \cite[Theorem 0.3]{PS},
$\ot'$ is modular, hence there exists a Hilbert cusp form $f_1$ of parallel weight 1
such that $\br_{f_1,\iota_2}\simeq \ot$.
Since $\ot$ has trivial determinant, $\det(\br_{f_1,\iota_2})$ is trivial. 
Further, applying Proposition~\ref{cong}, there exists a Hilbert cusp form $f$ of
${\rm GL}_2(\A_L)$ of $(\underline{2},\underline{0})$ with trivial central character such that $\br_{f,2}\simeq \ot$.
Let $\pi$ be the cuspidal automorphic representation of ${\rm GL}_2(\A_L)$ corresponding to $f$. For each infinite place $v$ of $L$, the local Langlands parameter of $\pi_{f,v}$ is given by
\[
\phi_{\pi_{f,v}}(z)=
\,\diag\left(\Big(\ds\frac{z}{\overline{z}}\Big)^\frac{1}{2},\Big(\frac{z}{\overline{z}}\Big)^{-\frac{1}{2}}\right).
\]

Applying Theorem \ref{gal-theta}, we have
a cuspidal automorphic representation $\Pi$ of ${\rm GSp}_4(\A_F)$ 
corresponding to ${\rm Ind}^{G_F}_{G_L}\rho_{f,\iota_2}$. 
It follows that the local Langlands parameter of $\Pi_v$ is $\phi_{(1;1,0)}
\otimes |\cdot|^{-1}$. Therefore, $\Pi_v$ is a limit of holomorphic discrete series and 
there exists a Hilbert-Siegel cusp Hecke eigen form $h$ of 
parallel weight 2 generating $\Pi$ as desired.
\end{proof}

\begin{remark}\label{level}
In the course of the proof, we apply Sasaki's (or Pilloni-Stroh's) modularity lifting theorem
which requires raising the level. Therefore, we cannot specify the levels of the Hilbert modular forms
appearing there. This implies that we are also unable to specify the levels of our Hilbert--Siegel modular forms.
However, there is some hope and a substantial expectation for further development of the level-lowering method.
Indeed,~\cite[Conjectures 4.7 and 4.9]{BDJ} indicate that we should be able to take
the Hilbert cusp forms in question to have the correct level corresponding to the Artin conductor of $\br$.
\end{remark}

\section{Examples}\label{sect:examples}
We are looking for monic irreducible polynomials $f\in\Q[x]$ of degree $6$ such that
\begin{enumerate}
  \item there is an isomorphism $\sigma$ from the Galois group of $f$ to the symmetric group $S_{5}$;
  \item for any complex conjugation $c$ in the Galois group of $f$, the element $\sigma(c)\in S_{5}$ has type $(2,2)$.
\end{enumerate}

We shall give two families of such polynomials. 
A general approach to produce them is to compute the sextic resolvent polynomial $f$ of an
irreducible quintic polynomial over $F$ with Galois group $S_5$.
% so that $f$ factors over $F$ into a polynomial of degree 1 and
% an irreducible quintic~\cite[Algorithm 6.3.10 of Section 6.3, pp.316--324]{Cohen}.
This can be done using symbolic computation, in a straightforward but very tedious way;
also, writing down an explicit form would take considerable space.
To get around this, we apply classical results of Hermite and Klein. 

\subsection{Using generic polynomials}\label{subsect:generic}
Let $F=\Q$. 
Hermite proved that the polynomial $P\in \Q(s,t)[x]$ given by
\[
  P(x) = x^{5}+sx^{3}+tx+t
\]
is generic for $S_{5}$, which means that every $S_{5}$-extension $K/\Q$ is the splitting field of some specialization of $P$, and that the splitting field of $P$ has Galois group $S_{5}$ over $\Q(s,t)$.
A proof appears in~\cite[Proposition 2.3.8]{JLY}.
The discussion in Section 2.4 of the same book contains the following observation:
If $h\in\Q[x]$ is a quintic polynomial with Galois group $S_{5}$, then the Weber sextic resolvent of $h$ is an irreducible sextic with the same splitting field, corresponding to the transitive embedding of $S_{5}$ into $S_{6}$.
We can apply this to a specialization $h$ of $P$ with $s,t\in\Q$, and get the Weber sextic resolvent
\[
  f(x) = (x^{3} + b_{4}x^{2}+b_{2}x+b_{0})^{2} - 2^{10}\disc(h)x,
\]
where $\disc(h)$ is the discriminant of $h$ and the coefficients are given by
\begin{align*}
  b_{0} &= -176s^{2}t^{2}+28s^{4}t+4000st^{2}-s^{6}+320t^{3}\\
  b_{2} &= 3s^{4}-8s^{2}t+240t^{2}\\
  b_{4} &= -3s^{2}-20t.
\end{align*}

Taking different rational values of $s$ and $t$, we can obtain lots of examples of $f$ with property (1) above.

Property (2) is easy to verify once $s$ and $t$ are given, as it is equivalent to $h$ having a unique real root.
We determine the relation between the values of $s$ and $t$ and the number of real roots of $h$ using Sturm's Theorem~\cite[Theorem 2.62]{BPR}, which requires computing the Sturm sequence
\begin{align*}
  h_0(x) &= h(x) = x^5 + sx^3 + tx + t\\ 
  h_1(x) &= h'(x) = 5x^4 + 3sx^2 + t\\ 
  h_2(x) &= - \big(h_0(x)\text{ mod }h_1(x)\big) = -\frac{2}{5}\,sx^3-\frac{4}{5}\,tx-t\\ 
  h_3(x) &= - \big(h_1(x)\text{ mod }h_2(x)\big) = \frac{g_3}{s}\,x^2+\frac{25t}{2s}\,x-t\\ 
  h_4(x) &= - \big(h_2(x)\text{ mod }h_3(x)\big) = \frac{tg_4}{2g_3^2}\,x+\frac{t(5t-s^2)(20t-9s^2)}{g_3^2}\\ 
  h_5(x) &= - \big(h_3(x)\text{ mod }h_4(x)\big) = \frac{g_3^2g_5}{g_4^2},
\end{align*}
where $a(x)\text{ mod }b(x)$ denotes the remainder of the polynomial division of $a(x)$ by $b(x)$, and for future use we define
\begin{align*}
  g_3(s,t) &= 10t-3s^2\\ 
  g_4(s,t) &= 12s^4-88s^2t+125st+160t^2\\ 
  g_5(s,t) &= 108s^5+16s^4t-900s^3t-128s^2t^2+2000st^2+256t^3+3125t^2.
\end{align*}

Given a polynomial $p$ of degree $n$ in $x$ with leading coefficient $a_n$, we define the sign of $p$ at $+\infty$ to be the sign of $a_n$, and the sign of $p$ at $-\infty$ to be $(-1)^n$ times the sign of $a_n$.
For the Sturm sequence $h_0,h_1,\dots,h_5$, we write $V(-\infty)$ for the number of sign changes in the sequence of signs at $-\infty$, and $V(+\infty)$ for the number of sign changes in the sequence of signs at $+\infty$.
Sturm's Theorem then says that the number $r$ of real roots of $h$ is given by
\begin{equation*}
  r = V(-\infty) - V(\infty).
\end{equation*}

We gather some observations that will simplify our analysis:
\begin{enumerate}
  \item If $t<0$ then $g_3(s,t)<0$: clear as it is a sum of negative terms.
  \item If $s<0$ and $t<0$ then $g_4(s,t)>0$: clear as it is a sum of positive terms.
  \item If $s>0$ and $t>0$ then $g_5(s,t)>0$.
    This follows from completing a couple of squares to rewrite:
    \begin{equation*}
      g_5(s,t)=3s(6s^2-25t)^2+16t(s^2-4t)^2+125st^2+3125t^2.
    \end{equation*}
  \item If $s>0$, $t>0$, and $g_3(s,t)>0$ then $g_4(s,t)>0$.
    To see this, note that
    \begin{equation*}
      g_4(s,t)=\frac{4}{3}\,g_3^2(s,t)+\frac{8}{3}\,tg_3(s,t)+125st.
    \end{equation*}
  \item If $s>0$, $t<0$, and $g_4(s,t)<0$, then $g_5(s,t)>0$.
    This follows from
    \begin{multline*}
      g_5(s,t)=\frac{8}{5}\,g_4(s,t)t+108s^5-t\left(\frac{16}{5}\,s^4+900s^3\right)\\ 
      +t^2\left(\frac{64}{5}\,s^2+1800s+3125\right).
    \end{multline*}
  \item If $s<0$, $t>0$, $g_3(s,t)>0$ and $g_4(s,t)>0$, then $g_5(s,t)>0$.
    This follows implicitly from Sturm's Theorem: if $g_5(s,t)<0$ then the sign sequence at $-\infty$ is $-+----$ and the sign sequence at $+\infty$ is $+++-+-$, hence the quantity $V(-\infty)-V(\infty)=-1$.
\end{enumerate}

Table~\ref{tbl:signs} gives the number $r$ of real zeros of $h(x)$ in relation to the signs of the various relevant quantities.
We have used the observations made above to reduce the number of possibilities from $2^5=32$ to $15$:

\begin{table}[ht]
\begin{tabular}{cccccccccr} \toprule
  $s$ & $t$ & $g_3$ & $g_4$ & $g_5$ & signs at $-\infty$ & $V(-\infty)$ & signs at $+\infty$ & $V(+\infty)$ & $r$\\ \midrule
  $+$ & $+$ & $+$   & $+$   & $+$   & $-+++-+$  & $3$          & $++-+++$  & $2$          & $1$\\ 
  $+$ & $+$ & $-$   & $+$   & $+$   & $-++--+$  & $3$          & $++--++$  & $2$          & $1$\\
  $+$ & $+$ & $-$   & $-$   & $+$   & $-++-++$  & $3$          & $++---+$  & $2$          & $1$\\ \midrule 
  $-$ & $+$ & $+$   & $+$   & $+$   & $-+---+$  & $3$          & $+++-++$  & $2$          & $1$\\
  $-$ & $+$ & $+$   & $-$   & $+$   & $-+--++$  & $3$          & $+++--+$  & $2$          & $1$\\
  $-$ & $+$ & $+$   & $-$   & $-$   & $-+--+-$  & $4$          & $+++---$  & $1$          & $3$\\
  $-$ & $+$ & $-$   & $+$   & $+$   & $-+-+-+$  & $5$          & $++++++$  & $0$          & $5$\\
  $-$ & $+$ & $-$   & $+$   & $-$   & $-+-+--$  & $4$          & $+++++-$  & $1$          & $3$\\
  $-$ & $+$ & $-$   & $-$   & $+$   & $-+-+++$  & $3$          & $++++-+$  & $2$          & $1$\\
  $-$ & $+$ & $-$   & $-$   & $-$   & $-+-++-$  & $4$          & $++++--$  & $1$          & $3$\\ \midrule 
  $-$ & $-$ & $-$   & $+$   & $+$   & $-+-+++$  & $3$          & $++++-+$  & $2$          & $1$\\
  $-$ & $-$ & $-$   & $+$   & $-$   & $-+-++-$  & $4$          & $++++--$  & $1$          & $3$\\ \midrule
  $+$ & $-$ & $-$   & $+$   & $+$   & $-++-++$  & $3$          & $++---+$  & $2$          & $1$\\ 
  $+$ & $-$ & $-$   & $+$   & $-$   & $-++-+-$  & $4$          & $++----$  & $1$          & $3$\\ 
  $+$ & $-$ & $-$   & $-$   & $+$   & $-++--+$  & $3$          & $++--++$  & $2$          & $1$\\ 
  \bottomrule
  \medskip
\end{tabular}
  \caption{Details of the inputs to Sturm's Theorem, depending on the possible signs of $s$, $t$, $g_i(s,t)$. The output of Sturm's Theorem, the number $r$ of real roots of the polynomial $h$, appears in the rightmost column.}\label{tbl:signs}
\end{table}

We can summarize the findings in Table~\ref{tbl:signs} as follows:
\begin{proposition}
  The polynomial $h(x)=x^5+sx^3+tx+t$ has exactly $3$ real roots if and only if $g_5(s,t)<0$.
  If $g_5(s,t)>0$, then $h$ has exactly $5$ real roots if and only if $s<0$, $t>0$, $g_3(s,t)<0$, and $g_4(s,t)>0$.
\end{proposition}
The complement of the conditions listed in the Proposition gives the region in the $(s,t)$-plane of interest to us, namely that where $h$ has exactly one real root.
We note that this occurs everywhere in the first quadrant, and almost nowhere in the third quadrant.
The behaviour in the second quadrant is by far the most interesting, as can be seen in Figure~\ref{fig:allquads}.

\begin{figure}[ht]
  \includegraphics[width=0.45\linewidth]{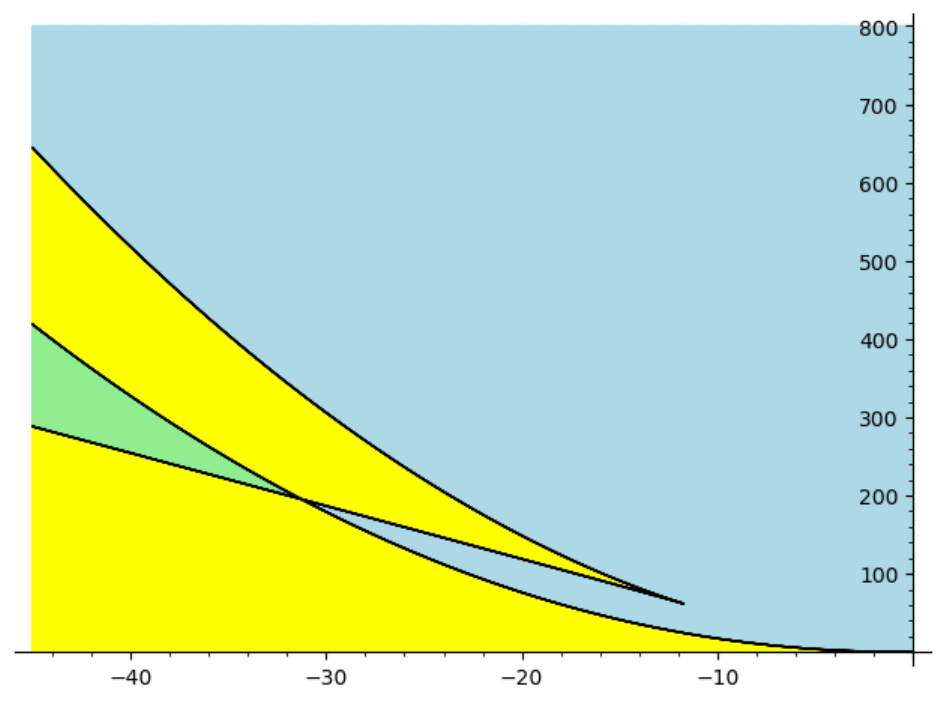}
  \includegraphics[width=0.45\linewidth]{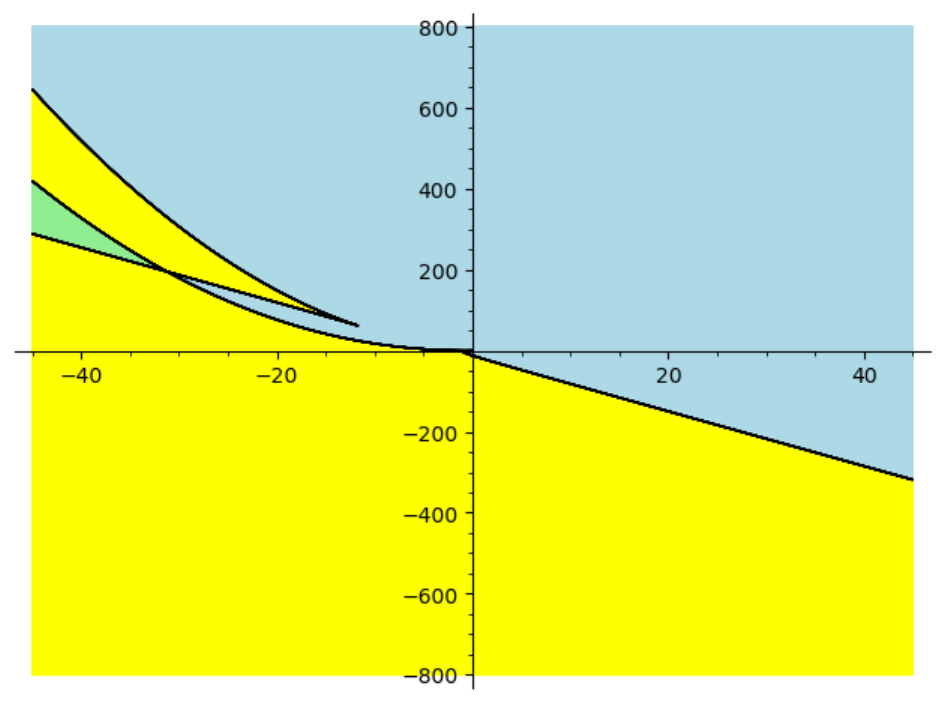}
  \caption{Second quadrant (left) and entire $(s,t)$-plane (right): blue (resp.\ yellow, green) points represent polynomials $h$ having precisely $1$ real root (resp.\ $3$, $5$ real roots). The boundaries of these regions are given by the black curve $g_5(s,t)=0$.}\label{fig:allquads}
\end{figure}

\subsection{Using $5$-torsion on elliptic curves}\label{subsect:division}
Next we consider the quintic polynomials arising from $5$-division points of
elliptic curves.
This has the advantage that an explicit form of the resolvent is well-known: it goes back to work of Klein.
Almost everything that follows is described in~\cite{G-thesis}.

Let $F$ be a totally real field not containing $\sqrt{5}$. 
Let
\[
  E\colon\quad y^2+a_1xy+a_3y=x^3+a_2x^2+a_4x+a_6
\]
be an elliptic curve over $F$ with the property that the mod 5 representation $\br_{E,5}\colon G_F\lra {\rm GL}_2(\F_5)$ is full,\footnote{We say that $\br_{E,p}$ is \emph{full} if ${\rm End}_{\overline{F}}(E)=\Z$ and the image of $\br_{E,p}$ contains ${\rm SL}_2(\F_p)$.} hence surjective.
The projective image of $\br_{E,5}$ is ${\rm PGL}_2(\F_5)$, which is isomorphic to $S_5$. 
In fact, the faithful action of ${\rm PGL}_2(\F_5)$ on the projective line 
$\mathbb{P}^1(\F_5)$ by fractional linear transformations yields 
${\rm PGL}_2(\F_5)\hookrightarrow {\rm Aut}(\mathbb{P}^1(\F_5))\simeq S_6$. 
Since $|{\rm PGL}_2(\F_5)|=120$, by the classification of the subgroups of $S_6$ 
(cf. GAP), ${\rm PGL}_2(\F_5)$ is isomorphic to $S_5$. Further, there are no fixed points in 
this action, hence the image of ${\rm PGL}_2(\F_5)$ is a transitive subgroup 
of $S_6$.  
Under this identification, for each complex conjugation $c$, $\br_{E,p}(c)\sim 
\begin{pmatrix}
0 & 1 \\
1& 0
\end{pmatrix}
$ corresponds to 
an element of $S_6$ of type $(2,2)$. 

The corresponding $S_5$-extension of $F$ is described by a quintic polynomial
\[
  \theta_{E,5}(x)=x^5-40\Delta x^2-5c_4\Delta x-c^2_4\Delta,
\]
(see~\cite[Equation (1.24)]{G-thesis})
where the invariants $\Delta,c_4$ are defined in~\cite[\S{}III.1, p.42]{Sil-Sec}.
The resolvent of $\theta_{E,5}$ is computed as in~\cite[Equation (2.37)]{G-thesis} in conjunction with~\cite[Proposition 3.2.1]{Ku} and it is explicitly given by
\begin{multline*}
  f_{E,6}(x):=x^6+b_2x^5+10b_4x^4+40b_6x^3+80b_8x^2\\ 
  +16(b_2b_8-b_4b_6)x+(-b_2b_4b_6+b^2_2b_8-5b^2_6),
\end{multline*}
where $b_2,b_4,b_6,b_8$ are also defined in~\cite[\S{}III.1, p.42]{Sil-Sec}.
The Galois group of the splitting field $F_{f_{E,6}}$ of $f_{E,6}(x)$ over $F$ in $\bQ$ is a priori embedded in $S_6$ 
by permuting the roots but
it is isomorphic to $S_5$ since $F_{f_{E,6}}=F_{\theta_{E,5}}$ by construction. 
Let us consider the hyperelliptic curve $C=C_E$ defined by $y^2=f_{E,6}(x)$. 
The above argument gives a natural embedding of ${\rm Im}(\br_{C,2})$ into $S_6$.
It is well-known that $\br_{E,5}$ is totally odd and it shows that $\br_{C,2}(c)$ is of type $(2,2)$ for 
each complex conjugation of $G_F$. Further, if $f_{E,6}(x)$ is irreducible, then ${\rm Im}(\br_{C,2})$ is 
transitive in $S_6$ and hence is of type $S_5(a)$. Summing up, we have 
\begin{thm}\label{ex}Assume that $\br_{E,5}$ is full and $f_{E,6}$ is irreducible. 
Then $\br_{C_E,2}$ is automorphic in the sense of Theorem~\ref{main1}.
\end{thm}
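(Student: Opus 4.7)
The plan is to verify that the hypotheses of Theorem~\ref{main1} are satisfied for $C = C_E$, after which the result follows at once. The three conditions to check are: ${\rm Im}(\br_{C_E,2}) \simeq S_5$; that this copy sits as a transitive subgroup of $S_6$ under the fixed identification ${\rm GSp}_4(\F_2)\simeq S_6$; and that the image of any complex conjugation has type $(2,2)$ as an element of $S_5$.

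First I would dispatch the first two conditions together. By the classical construction of Klein's sextic resolvent (as recorded in~\cite{Ku} and~\cite{G-thesis}), the splitting fields $F_{f_{E,6}}$ and $F_{\theta_{E,5}}$ coincide. Since $\br_{E,5}$ is full, its projectivization $G_F \to {\rm PGL}_2(\F_5) \simeq S_5$ is surjective, giving ${\rm Gal}(F_{\theta_{E,5}}/F) \simeq S_5$; under the standing irreducibility hypothesis on $f_{E,6}$, the root-permutation embedding ${\rm Gal}(F_{f_{E,6}}/F) \hookrightarrow S_6$ is then transitive. Because $\br_{C_E,2}$ factors through ${\rm Gal}(F_{f_{E,6}}/F)$ (as recalled in the introduction via~\cite{Sa}), its image is exactly this transitive copy of $S_5$, which by Remark~\ref{types} is of type $S_5(a)$.

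Next I would handle the complex conjugation condition. Since $\det\br_{E,5}$ is the mod $5$ cyclotomic character, $\br_{E,5}(c)$ is an involution of determinant $-1$ in ${\rm GL}_2(\F_5)$, hence conjugate to $\diag(1,-1)$. The sign character on $S_5 \simeq {\rm PGL}_2(\F_5)$ is induced by the determinant modulo squares in $\F_5^\times$, and since $-1\equiv 2^2\pmod{5}$ is a square, the image of $c$ has sign $+1$, so it is an even involution in $S_5$, i.e., a double transposition. This is precisely the type $(2,2)$ condition.

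With these three hypotheses verified, an application of Theorem~\ref{main1} to $C_E$ yields the claimed automorphy of $\br_{C_E,2}$. I do not anticipate any substantive obstacle: essentially all the hard work is absorbed into Theorem~\ref{main1}, while the present argument is a compact verification resting on the classical Klein--Kuribayashi identification $F_{f_{E,6}}=F_{\theta_{E,5}}$ and the elementary arithmetic observation that $-1$ is a square modulo $5$.
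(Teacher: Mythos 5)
Your proof is correct and follows essentially the same route as the paper: verify the three hypotheses of Theorem~\ref{main1} via the identification $F_{f_{E,6}}=F_{\theta_{E,5}}$, the fullness of $\br_{E,5}$, and the irreducibility of $f_{E,6}$, then invoke that theorem. The only difference is that where the paper merely asserts that total oddness of $\br_{E,5}$ forces $\br_{C_E,2}(c)$ to have type $(2,2)$, you supply the justification (the image of $c$ is a nontrivial involution landing in ${\rm PSL}_2(\F_5)\simeq A_5$ because $-1$ is a square modulo $5$, and the only even involutions in $S_5$ are double transpositions), which is a welcome elaboration.
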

We illustrate the method with some explicit examples.
\begin{itemize}
  \item Let us consider the elliptic curve $E\colon y^2+y=x^3+x^2$ over $\Q$, of conductor 43. 
By~\cite[Elliptic curve with label 43.a1]{LMFDB}, the representation $\br_{E,5}\colon G_\Q\lra {\rm GL_2(\F_5)}$ is full.
On the other hand, we have $\theta_{E,5}=x^5+ 1720 x^2+ 3440 x+11008$ and
it is easy to check (e.g.\ using SageMath~\cite{Sage} or Magma~\cite{BCP}) that
\begin{equation}\label{eq:explicit}
  f_{E,6}(x)=x^6+4x^5+40x^3+80x^2+64x+11
\end{equation}
is irreducible over $\Q$ with Galois group isomorphic to $S_5$.
By Theorem~\ref{ex}, $\br_{C_E,2}$ is automorphic.
\item For an example where the ground field is not $\Q$, consider
  \begin{equation*}
    E\colon y^2=x^3-x^2+x\quad\text{ over }F=\Q(\sqrt{2}),
  \end{equation*}
  see~\cite[Elliptic curve 72.1-a3 over number field $\Q(\sqrt{2})$]{LMFDB}.
  Once more, $\br_{E,5}$ is full and the resolvent
    \begin{equation*}
      f_{E,6}(x)=x^6+4x^5+20x^4-80x^2-64x-16
    \end{equation*}
    is irreducible over $F$, so we conclude that $\br_{C_E,2}$ is automorphic.
\end{itemize}

\subsection{Databases of genus 2 curves}

The methods used in~\S\ref{subsect:generic} and~\S\ref{subsect:division} produce infinite families of examples satisfying the conditions of Theorem~\ref{main1}, but they tend to give hyperelliptic curves with large conductors.
For instance, the conductor of the curve~\eqref{eq:explicit} is\footnote{We used the Magma package Genus2Conductor~\cite{DD} by Tim Dokchitser and Christopher Doris for the computation of the conductors listed in this section.}
\begin{equation*}
  \numprint{4786321400000}=2^{6}\cdot 5^{5} \cdot 7 \cdot 43^{4}
\end{equation*}
and the curve corresponding to the choice of parameters $s=0$, $t=-1$ in~\S\ref{subsect:generic} has equation
\begin{equation*}
  y^{2} = x^6 + 40x^5 + 880x^4 + 8960x^3 + 44800x^2 - 3091456x + 102400
\end{equation*}
and conductor
\begin{equation*}
  \numprint{82311610000}= 2^4 \cdot 5^4 \cdot 19^2 \cdot 151^2
\end{equation*}

There are however databases of genus $2$ curves with conductor that is either small or has restricted prime factors.
Currently the most comprehensive are based on~\cite{BSSVY}, which produced:
\begin{itemize}
  \item \numprint{66158} curves with conductor less than \numprint{1000000}, to be found at~\cite[Genus 2 curves over $\Q$]{LMFDB}; from this list, we verified that the curves labelled
  \begin{equation*}
    \begin{array}{rrr}
      \texttt{50000.b.800000.1}
      & \texttt{378125.a.378125.1}
      & \texttt{681472.a.681472.1}\\
      \texttt{64800.c.648000.1}
      & \texttt{382347.a.382347.1}
      & \texttt{703125.b.703125.1}\\
      \texttt{180625.a.903125.1}
      & \texttt{506763.a.506763.1}
  \end{array}
  \end{equation*}
  satisfy the conditions of Theorem~\ref{main1};
  \item \numprint{487493} curves with $5$-smooth discriminant, to be found at~\cite{Sutherland}; we verified that \numprint{4885} of these curves satisfy the conditions of Theorem~\ref{main1}.
\end{itemize}
Both databases provide the curves as global minimal models described by equations of the form $y^2+h(x)\,y=f(x)$.

\bigskip

\bibliographystyle{plain}
\bibliography{refs}

\end{document}